\newtheorem{theorem}{Theorem}[section]
\newtheorem{corollary}[theorem]{Corollary}
\newtheorem{lemma}[theorem]{Lemma}
\newtheorem{proposition}[theorem]{Proposition}
\newtheorem{question}[theorem]{Question}
\theoremstyle{definition}
\newcommand{\RR}{{\mathbb R}}
\newcommand{\ZZ}{{\mathbb Z}}
\newcommand{\id}{{\rm id}}
\newcommand{\C}{{\mathcal C}}
\theoremstyle{remark}
\begin{document}

\title[Equivariant embeddings  of manifolds  into Euclidean spaces]
{Equivariant embeddings  of manifolds  into Euclidean spaces}
 
\author{Zhongzi Wang}
\address{Department of Mathematical Sciences, Tsinghua University, Beijing, 100084, CHINA}
\email{wangzz18@mails.tsinghua.edu.cn}

\subjclass[2010]{Primary 57M60; Secondary 57R40, 20C30, 20H10}

\keywords{Finite group actions on manifolds, Equivariant embeddings in Euclidean spaces, Fixed point sets, Hurwitz homomorphism}


\begin{abstract} Suppose a finite group $G$ acts on a manifold $M$. 
By a theorem of Mostow, also Palais, there is a $G$-equivariant embedding of 
 $M$ into the $m$-dimensional Euclidean space $\RR^{m}$ for some $m$. We are interested in some explicit bounds of such $m$.
  
 First we provide an upper bound: there exists a $G$-equivariant embedding of 
 $M$ into $\RR^{d|G|+1}$, where $|G|$ is the order of $G$ and $M$ embeds into $\RR^d$. 
 Next we provide a lower bound for finite cyclic group action $G$: If there are $l$ points having pairwise co-prime lengths of $G$-orbits  greater than $1$ and there is a  
 $G$-equivariant embedding of 
 $M$ into $\RR^{m}$, then $m\ge 2l$.  

Some applications to surfaces are given.
\end{abstract}

\date{}
\maketitle

\section{Introduction}
In this note, we assume that $M$ is a compact and connected polyhedron  and $G$ is a finite group which acts faithfully on $M$. We often call such $M$ and $G$ a pair $(M,G)$.
Recall  $\RR^m$ is the $m$-dimensional Euclidean space,  and $SO(m)$ is the $m$-dimensional special orthogonal group, which  
 acts on $\RR^m$ canonically.
We use  $|G|$ to denote the order of a finite group $G$ and $F_g$ 
to denote the closed orientable surface of genus $g$.

{\bf Definitions:} 
Let $(M, G)$ be a pair with the action of $G$ on $M$ given by a representation 
$\rho : G \to \text{Homeo}(M)$, where $\text{Homeo}(M)$ is the group of homeomorphisms on $M$. 
Call an embedding $e: M \to \RR^m$ $G$-equivariant, if there is an orthogonal representation $\tilde \rho: G\to SO(m)$ such that 
$$e\circ \rho (g)=\tilde \rho(g) \circ e$$
for any $g\in G$. 

\vskip 0.2truecm



When $G$ is a finite cyclic group generated  by a periodic map $f$ on $M$, we often  call
$G$-action as $f$-action and $G$-equivariant as $f$-equivariant.


The existence of $G$-equivariant embedding for pair $(M, G)$ follows from the work of Mostow, also Palais,  see \cite[Theorem 6.1]{Mos} and \cite[Theorem]{Pa}.

\begin{question} For a given pair $(M, G)$, to find some concrete small $n$, or stronger,  the minimal $n$,  so that there is a $G$-equivariant embedding $M\to \RR^n$.
\end{question}


There are some systematic
studies on  $G$-equivariant embeddings for graphs and surfaces into $\RR^3$ and $S^3$, see \cite{Cos}, \cite{FNPT}, \cite{WWZZ1}, \cite{WWZZ2} and the references therein. Those studies rely on the geometry and topology of 3-manifolds developed in the last several decades, as well as on our 3-dimensional intuition.  Once we know that there is a $G$-equivariant embedding  $M\to \RR^3$, then the integer $3$ is often the minimal
$n$ for the pair  $(M, G)$ in Question 1.1, since usually graphs and surfaces themselves can not be embedded into $\RR^2$.
  
If there is no  $G$-equivariant embedding $M\to \RR^3$ for a  pair $(M,G)$,
then Question 1.1 becomes  more complicated. 
 Some  results for $G$-equivariant embeddings $M\to \RR^n$ to high dimensional Euclidean space appear recently, see \cite{Zi}, \cite {Wa}.

In this note, we try to give some general explicit bounds for Question 1.1.

First we give an upper bound in terms of $|G|$, and the dimension of Euclidean space that $M$ embeds.  We state the smooth version.
The  topological version follows by just ignoring the smoothness in both the statement and its proof.

\begin{proposition}\label{upperbound} Suppose $M$ is a closed smooth
manifold and there is a smooth finite group action $G$ on $M$. If there is a smooth embedding of 
$M$ into $\RR^d$, then there exists a $G$-equivariant smooth embedding of $M$ into $\RR^{d|G|+1}$.
\end{proposition}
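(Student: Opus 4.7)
The plan is to use the left regular representation of $G$ to propagate the given embedding over all $G$-orbits. Identify $\RR^{d|G|}$ with $\bigoplus_{g\in G}\RR^d$ indexed by the group, and define
\[
\tilde e\co M\to\RR^{d|G|},\qquad \tilde e(x)=\bigl(e(g^{-1}x)\bigr)_{g\in G}.
\]
Let $G$ act on $\RR^{d|G|}$ by the block-permutation representation $\tilde\rho(h)\bigl((v_g)_{g\in G}\bigr)=(v_{h^{-1}g})_{g\in G}$, which simply permutes the $|G|$ copies of $\RR^d$ via left multiplication on $G$. A direct computation gives $\tilde e(hx)=\tilde\rho(h)\tilde e(x)$, so $\tilde e$ is $G$-equivariant.

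Next I would check that $\tilde e$ is a smooth embedding. Smoothness is immediate since each block is the composition of $e$ with the smooth map $x\mapsto g^{-1}x$. Injectivity and the immersion property follow because the block indexed by $g=\id$ is exactly the original embedding $e$; since $M$ is compact, an injective smooth immersion is automatically a smooth embedding.

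The remaining subtlety is that $\tilde\rho$ lands in $O(d|G|)$ but not necessarily in $SO(d|G|)$: decomposing $L_h$ (left multiplication by $h$ on $G$) into disjoint cycles shows that $\det\tilde\rho(h)=\varepsilon(h)^d$, where $\varepsilon\co G\to\{\pm 1\}$ is the sign of the left regular permutation representation $L\co G\to S_{|G|}$. To correct this, enlarge the target to $\RR^{d|G|+1}=\RR^{d|G|}\oplus\RR$ and let $h\in G$ act on the extra coordinate by the character $\varepsilon(h)^d$. Since $\varepsilon^d$ is a homomorphism into $\{\pm 1\}$, this extends $\tilde\rho$ to a representation $G\to SO(d|G|+1)$: the new determinant is $\varepsilon(h)^d\cdot\varepsilon(h)^d=1$. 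Composing $\tilde e$ with the inclusion $v\mapsto(v,0)$ then gives the desired $G$-equivariant smooth embedding $M\hookrightarrow\RR^{d|G|+1}$.

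The main obstacle is really just the $O$ versus $SO$ distinction, which accounts for the extra $+1$ in the bound; the rest is a Mostow/Palais-type averaging made concrete via the regular representation.
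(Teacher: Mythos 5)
Your proof is correct and follows essentially the same route as the paper: both spread the given embedding over the group via the regular representation, get injectivity and the immersion property from the block indexed by the identity, and repair the $O(d|G|)$ versus $SO(d|G|)$ issue by appending one coordinate carrying the determinant character. Your explicit identification of $\det\tilde\rho(h)$ as $\varepsilon(h)^d$ is a slightly more concrete version of the paper's determinant correction, but the argument is the same.
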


Then we give a lower bound for finite cyclic group actions in terms of periods of periodic points for periodic maps.
Suppose $G=\ZZ_n$ is generated by  a periodic map $f$ of order $n$ on $M$. Under $f$-action each point of $M$ has its $f$-orbit with length dividing $n$.

\begin{proposition}\label{lowerbound}
Suppose $f$ is a periodic map on $M$ and  there are $l$ points having pairwise 
 co-prime lengths of $f$-orbits greater than $1$.
If there is an $f$-equivariant embedding $e: M\to \RR^m$, then $m\ge 2l$.
\end{proposition}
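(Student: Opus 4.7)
The plan is to exploit the orthogonal normal form of $A := \tilde\rho(f) \in SO(m)$. First I would decompose $\RR^m$ orthogonally as
\[
\RR^m = V_0 \oplus V_{-1} \oplus \bigoplus_{j=1}^{k} P_j,
\]
where $V_0 = \ker(A-I)$, $V_{-1} = \ker(A+I)$, and each $P_j$ is a $2$-dimensional $A$-invariant plane on which $A$ acts as rotation by $2\pi p_j/q_j$ with $\gcd(p_j,q_j)=1$ and $q_j\ge 3$. The crucial observation, which is what ultimately makes the bound $2l$ sharp rather than $2l-1$, is that $\det A = 1$ forces $\dim V_{-1}$ to be even.

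Next, for each of the $l$ distinguished points $x_i$ with $f$-orbit length $n_i$, I would write $e(x_i) = u_i + v_i + \sum_j w_{i,j}$ according to this decomposition. Equivariance together with injectivity of $e$ forces the $A$-orbit of $e(x_i)$ to have length exactly $n_i$, and computing this length block by block shows that $n_i$ is the least common multiple of $\{q_j : j \in S_i\}$ together with $2$ in case $v_i\ne 0$, where $S_i := \{j : w_{i,j}\ne 0\}$. In particular $q_j \mid n_i$ whenever $j\in S_i$; since each $q_j \ge 3$ while the $n_i$'s are pairwise coprime, the supports $S_1,\dots,S_l$ are pairwise disjoint.

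To close the argument I would run a short parity case analysis. At most one $n_i$ is even; for every odd $n_i$ one has $v_i = 0$ and $S_i \ne \emptyset$ (since $n_i>1$). If all $n_i$ are odd, or if the unique even $n_{i_0}$ also satisfies $S_{i_0} \ne \emptyset$, then disjointness gives $k \ge l$, hence $m \ge 2k \ge 2l$. The only remaining case is $S_{i_0} = \emptyset$, which forces $n_{i_0} = 2$ and $v_{i_0}\ne 0$; then $V_{-1}\ne 0$, and evenness of $\dim V_{-1}$ upgrades this to $\dim V_{-1}\ge 2$, while the other $l-1$ disjoint nonempty $S_i$'s give $k \ge l-1$. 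Together these yield $m \ge \dim V_{-1} + 2k \ge 2 + 2(l-1) = 2l$.

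The main obstacle is precisely this last subcase: promoting $\dim V_{-1} \ge 1$ to $\dim V_{-1} \ge 2$ relies on the hypothesis that the orthogonal representation lands in $SO(m)$ rather than merely $O(m)$. Without this, the proposition would fail, as witnessed by $M = [-1,1]$ with $f(x)=-x$ embedded in $\RR^1$ by the identity, where $l=1$ but $m=1<2l$.
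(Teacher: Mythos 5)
Your argument is correct, and it reaches the same bound by a route that is recognizably dual to the paper's but packaged differently. The paper's Lemma 3.1 works with complex eigenvalues: from a point of $A$-orbit length $w_j>1$ it extracts a nonzero vector killed by $\id+A+\cdots+A^{w_j-1}$, concludes that $A$ has an eigenvalue $e^{2\pi i k_j/w_j}\ne 1$, notes that pairwise coprimality makes these $s$ eigenvalues distinct, pairs each non-real one with its conjugate, and disposes of the single possible $-1$ (coming from the at most one even $w_j$) by observing that otherwise $\det A=-1$. You instead pass to the real orthogonal normal form $V_0\oplus V_{-1}\oplus\bigoplus P_j$ and compute orbit lengths block by block as an lcm; your rotation planes $P_j$ are exactly the paper's conjugate eigenvalue pairs, your disjointness of the supports $S_i$ is exactly the paper's distinctness of eigenvalues, and your use of $\det A=1$ to force $\dim V_{-1}$ even in the $n_{i_0}=2$ subcase is precisely the paper's determinant contradiction. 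What your version buys is a slightly more structural picture: the explicit divisibility $q_j\mid n_i$, the clean reduction of the whole problem to counting disjoint nonempty supports, and the observation (absent from the paper) that the bound genuinely requires $\tilde\rho$ to land in $SO(m)$ rather than $O(m)$, witnessed by $f(x)=-x$ on $[-1,1]\subset\RR^1$. The paper's version is shorter because the determinant identity $\det\bigl(\id+A+\cdots+A^{w-1}\bigr)=\det\prod_{k=1}^{w-1}\bigl(A-e^{2\pi ik/w}\id\bigr)$ lets it skip the normal form entirely. One small point worth making explicit in your write-up, which you use implicitly: the relevant $A$ here is $\tilde\rho(f)$, which has finite order because $\tilde\rho$ is a representation of the finite cyclic group generated by $f$, and this is what guarantees the rotation angles are rational multiples of $2\pi$ so that the $q_j$ are well-defined integers.
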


Finite group actions on surfaces are keeping to be an active topic since
the work of Hurwitz \cite{Hu}.
Some applications to finite group actions on surfaces are given below.

By Hurwitz theorem the order of any orientation-preserving finite group action on $F_g$, $g>1$, is bounded by $84(g-1)$ \cite{Hu}.  Since $F_g$ embeds into $\RR^3$ and $  84(g-1)\times 3= 252(g-1)$,  by Proposition \ref{upperbound} we have

\begin{corollary}\label{252}
 For any orientation-preserving finite group action $G$ on $F_g$, $g>1$, 
 $F_g$ can be $G$-equivariantly embedded into $\RR^{252(g-1)+1}$. 
\end{corollary}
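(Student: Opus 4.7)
The plan is to combine Proposition \ref{upperbound} with Hurwitz's classical bound, essentially reading off the statement from the two ingredients. First I would recall that any closed orientable surface $F_g$ admits a smooth embedding into $\RR^3$, realized for instance as the boundary of a genus-$g$ handlebody. Thus Proposition \ref{upperbound} applies with $d=3$, yielding a $G$-equivariant smooth embedding of $F_g$ into $\RR^{3|G|+1}$ with respect to some orthogonal representation $\tilde\rho\co G\to SO(3|G|+1)$.

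Next I would invoke Hurwitz's theorem: for any orientation-preserving finite group $G$ acting faithfully on $F_g$ with $g>1$, one has $|G|\le 84(g-1)$. Consequently $3|G|+1\le 252(g-1)+1$, and the dimension of the ambient Euclidean space produced above is at most the target dimension.

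Finally, to promote the equivariant embedding from $\RR^{3|G|+1}$ into $\RR^{252(g-1)+1}$, I would enlarge the orthogonal representation by taking the direct sum of $\tilde\rho$ with the trivial representation of dimension $252(g-1)-3|G|$, and compose the embedding with the corresponding standard linear inclusion $\RR^{3|G|+1}\hookrightarrow \RR^{252(g-1)+1}$. Equivariance is preserved because the inclusion intertwines $\tilde\rho$ with its trivial extension.

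There is essentially no obstacle in the argument itself; the nontrivial content is packaged in Proposition \ref{upperbound} and in Hurwitz's bound, and the corollary is a one-line combination of the two. The only thing to be a little careful about is the last step, where one must explicitly extend the orthogonal representation rather than simply including the image as a set, so that the resulting embedding remains $G$-equivariant with respect to a genuine orthogonal action on the larger Euclidean space.
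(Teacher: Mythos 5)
Your proposal is correct and follows essentially the same route as the paper: embed $F_g$ in $\RR^3$, apply Proposition \ref{upperbound} with $d=3$, and use the Hurwitz bound $|G|\le 84(g-1)$ to get $3|G|+1\le 252(g-1)+1$. Your final step of extending $\tilde\rho$ by a trivial summand to pass to the larger ambient space is a point the paper leaves implicit, and it is handled correctly.
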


The next result is a corollary of Proposition \ref{lowerbound}.

\begin{corollary}\label{2l} For any given integer $m>0$, there is a periodic map $f$ on a closed orientable surface $F$ 
such that there is no $f$-equivariant embedding $e: F\to \RR^m$. 
\end{corollary}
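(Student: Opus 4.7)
The plan is to apply Proposition~\ref{lowerbound} contrapositively: given $m$, I will exhibit a periodic map $f$ on some closed orientable surface $F$ such that $F$ contains $l$ points with pairwise coprime $f$-orbit lengths greater than $1$, where $l$ is chosen with $2l > m$. Proposition~\ref{lowerbound} then forbids any $f$-equivariant embedding $F\to\RR^m$.

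Fix $l$ with $2l > m$ and pick $l$ distinct primes $p_1,\dots,p_l$. Set $n=p_1 p_2 \cdots p_l$. I would realise $(F,f)$ as a cyclic branched $\ZZ_n$-cover $\pi\co F\to S^2$, with $f$ a generator of the deck group. Choose $l$ marked points $\bar x_1,\dots,\bar x_l\in S^2$ and assign to $\bar x_i$ the local monodromy $a_i:=p_i\in\ZZ_n$. The order of $p_i$ in $\ZZ_n$ is exactly $n/p_i$, so any preimage $x_i\in\pi^{-1}(\bar x_i)$ has stabiliser of order $n/p_i$ and therefore an $f$-orbit of length $p_i$.

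For the cover to close up as a connected $\ZZ_n$-cover of $S^2$, the prescribed monodromies must sum to $0$ in $\ZZ_n$ and generate $\ZZ_n$. Since $\gcd(p_i,p_j)=1$ for $i\neq j$, Bezout gives $1\in\langle p_1,\dots,p_l\rangle\subseteq\ZZ_n$, so the generation condition holds. For the sum condition I would adjoin one extra marked point $\bar x_{l+1}$ with monodromy $a_{l+1}:=-(p_1+\cdots+p_l)\pmod n$, omitting it if this element is already $0$. By the standard correspondence between regular branched covers of $S^2$ and such monodromy data, a closed connected orientable surface $F$ carrying a faithful $\ZZ_n$-action with the prescribed ramification exists; its genus is determined by Riemann--Hurwitz but is immaterial for the conclusion.

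The $l$ preimages $x_1,\dots,x_l\in F$ then have pairwise coprime $f$-orbit lengths $p_1,\dots,p_l$, all greater than $1$, so Proposition~\ref{lowerbound} applies and rules out any $f$-equivariant embedding $F\to\RR^m$. I do not expect a serious obstacle: the sole substantive ingredient is the existence of the cyclic branched cover with prescribed monodromies, which is classical. As an alternative one could invoke Harvey's existence criterion for cyclic actions on surfaces with a given signature, or build $(F,f)$ by iterated equivariant connected sums of rotations of $S^2$; the branched cover route seems cleanest.
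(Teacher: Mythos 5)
Your proposal is correct and follows essentially the same route as the paper: both reduce Corollary \ref{2l} to Proposition \ref{lowerbound} by building a Hurwitz-type cyclic branched cover of $S^2$ of degree $p_1\cdots p_l$ whose local monodromies produce $l$ points with pairwise coprime $f$-orbit lengths $p_1,\dots,p_l>1$; the paper merely packages the same monodromy data through the orbifold fundamental group and uses two branch points with inverse monodromies per prime, where you use one branch point per prime plus a single balancing point. The only detail to watch is that your Bezout/generation step needs $l\ge 2$ (for $l=1$ the class $p_1$ is $0$ in $\ZZ_{p_1}$ and the cover degenerates), which is harmless since you are free to enlarge $l$.
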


Proposition \ref{upperbound}, Proposition \ref{lowerbound} and Corollary \ref{2l} will be proved in Sections 2, 3, 4 respectively.

\vskip 0.3 true cm

{\bf Acknowledgement:} 
We thank the referee for the advice.


\section{An upper bound for finite group actions}


\begin{proof}[Proof of Proposition \ref{upperbound}]  Suppose the smooth action of $G$ on $M$ is given
by the representation 
$$\rho : G \to \text{Diff} (M)$$
where $\text{Diff}(M)$ is the group of diffeomorphisms on $M$. 
and the smooth embedding of $M$ into $\RR^d$ is given by 
$$e:M \to \RR^d.$$



We may assume $G=(\{1,2,..., n\},*)$.
With these notations we define a map $$\tilde e : M\to \RR^{d|G|}$$ as follows: for any $x \in M$, 
$$\tilde e (x)=(e(\rho(1)(x)),e(\rho(2)(x)),...,e(\rho(n)(x))).$$

We will prove $\tilde e$ is a smooth  embedding.  Since $\rho(1)$ is a diffeomorphism of $M$ onto itself, the first component $\tilde e_1$ of the mapping $\tilde e$, where $\tilde e_1=e\circ\rho(1)$,  is a smooth embedding. By the same reason, all components $\tilde e_j$ of $\tilde e$, $\tilde e_j=e\circ\rho(j)$ are smooth. Hence $\tilde e$ is smooth and injective. The rank of the Jacobi matrix of $\tilde e$ at each point $x \in M$ is not less than the rank of the Jacobi matrix of $\tilde e_1$, the first component of $\tilde e$, which equals the dimension of $M$ since $\tilde e_1$ is an embedding. Thus $\tilde e$ is also an immersion, and is hence a smooth embedding. 

We have smoothly embedded $M$ into $\RR^{d|G|}$ and we next prove that $\tilde e$ is a $G$-equivariant embedding. Recall that $O(m)$ is the $m$-dimensional orthogonal group. We construct a natural group action $\tilde  \rho$ of $G$ on $\RR^{d|G|}$, that is to define an embedding 
$$\tilde \rho: G\to O(d|G|)$$ by 
$$\tilde \rho(j)(y_1,y_2,...,y_n)=(y_{1*j},y_{2*j},...,y_{n*j}),$$ where each element $\tilde \rho(j)$ acts as an orthogonal transformation. It suffices to show that the image of $M$ under the embedding $\tilde e$ 
is invariant under the group action $\tilde  \rho$ on $\RR^{d|G|}$ and the actions $\tilde \rho$ and $\rho$ are commutative by the embedding $\tilde e$. If $y= \tilde e(x)$ where $x \in M$, then 

\begin{eqnarray*}
\tilde\rho(j)(\tilde e(x))&=&
\tilde\rho(j)(e(\rho(1)(x)),e(\rho(2)(x)),...,e(\rho(n)(x)))\\
&=&(e(\rho(1*j)(x)),e(\rho(2*j)(x)),...,e(\rho(n*j)(x)))\\
&=&(e(\rho(1)(\rho(j)(x))),e(\rho(2)(\rho(j)(x))),...,e(\rho(n)(\rho(j)(x))))\\
&=&\tilde e(\rho(j)(x)),
 \end{eqnarray*}
which implies $\tilde \rho(j)(y)\in \tilde e(M)$ and thus $\tilde e(M)$ is invariant under each $\rho(j)$, and is hence invariant under $\tilde \rho$ of $G$. 
We conclude $$\tilde \rho(g)\circ \tilde e=\tilde e\circ\rho(g)$$ for all $g\in G$.
 Moreover the restriction of the action $\tilde \rho$ on $\tilde e(M)$ is the
action $\rho$.  

If $\tilde \rho(G)\subset SO(d|G|)$, then we finish the proof. 
Otherwise let 
$$\tilde \rho^*: G\to SO(d|G|+1)$$ 
be a group homomorphism defined as 
$$\tilde \rho^*(g)=(\tilde \rho(g), \text{det}(\tilde \rho(g))\text{Id}_\RR),$$
where $\text{det}(\tilde \rho(g))$ is 1 if $\tilde \rho(g)$ is orientation preserving and $-1$ otherwise.

Now let $$\tilde e^*: M\to \RR^{d|G|+1} =\RR^{d|G|}\times \RR$$
be defined as 
$$\tilde e^*(x)=(\tilde e(x), 0).$$
Then $\tilde e^*$ is an embedding. Moreover 
 
$$\tilde \rho^*(g)\circ \tilde e^*(x)=\tilde \rho^*(g) (\tilde e(x),0)=(\tilde \rho(g)\circ \tilde e(x),0)=(\tilde e \circ  \rho(g)(x),0)= \tilde e^*\circ\rho(g)(x)$$ for all $x\in M$ and $g\in G$.
Proposition \ref{upperbound} is proved.  \end{proof}

\section{A lower bound for finite cyclic group actions}

\begin{lemma}
{\it Let  $A$  be an orientation-preserving isometry of  $\RR^m$  with a fixed point (i.e., $A\in SO(m)$).
Assume that there are $s$  points in  $\RR^m$  having pairwise coprime lengths of $A$-orbits greater than $1$.
Then  $m\ge 2s$.}
\end{lemma}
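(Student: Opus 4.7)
The plan is to diagonalize $A$ over $\CC$, associate each of the $s$ points to a disjoint set of nontrivial eigenvalues, and then count dimensions, using $\det A = 1$ to handle the eigenvalue $-1$.

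First I would translate so that the fixed point of $A$ is the origin, making $A \in SO(m)$ a linear orthogonal transformation. Complexify and decompose $\CC^m = \bigoplus_\lambda E_\lambda$ into eigenspaces of $A$, with $m_\lambda := \dim_\CC E_\lambda$. Two standard facts about real orthogonal matrices will be used: (i) $|\lambda|=1$ for every eigenvalue, and non-real eigenvalues occur in conjugate pairs with $m_\lambda = m_{\bar\lambda}$, and (ii) the condition $\det A = 1$ forces the multiplicity $m_{-1}$ to be even (since the only negative real eigenvalue is $-1$, and conjugate non-real pairs contribute $|\lambda|^2=1$ to the determinant).

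For each of the given points $x_i$ with $A$-orbit length $n_i > 1$, write $x_i = \sum_\lambda x_{i,\lambda}$ with $x_{i,\lambda} \in E_\lambda$. The orbit length $n_i$ is exactly the least common multiple of the orders of those $\lambda$ with $x_{i,\lambda} \neq 0$. Define $T_i = \{\lambda \neq 1 : x_{i,\lambda} \neq 0\}$. Then $T_i$ is nonempty (since $n_i > 1$), is stable under complex conjugation (since $x_i$ is real), and the sets $T_1, \ldots, T_s$ are pairwise disjoint: any $\lambda \in T_i \cap T_j$ would have order dividing the coprime integers $n_i$ and $n_j$, hence order $1$, contradicting $\lambda \neq 1$.

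The main step is then to show $\sum_{\lambda \in T_i} m_\lambda \ge 2$ for every $i$. If $T_i$ contains some $\lambda \neq \pm 1$, then $\bar\lambda \in T_i$ is distinct from $\lambda$ and $m_\lambda + m_{\bar\lambda} \ge 2$. Otherwise $T_i = \{-1\}$, and fact (ii) gives $m_{-1} \ge 2$. Summing over the pairwise disjoint $T_i$ yields $m = \sum_\lambda m_\lambda \ge \sum_{i=1}^s \sum_{\lambda \in T_i} m_\lambda \ge 2s$, as required. The subtle point, and the only place where orientation-preservation is essential, is the degenerate case $T_i = \{-1\}$ (which can happen only for $n_i = 2$): without $\det A = 1$, such an $x_i$ would contribute only a single real dimension and one would get only $m \ge 2s - 1$; the even-multiplicity consequence of $\det A = 1$ is exactly what closes this gap.
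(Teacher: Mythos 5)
Your proof is correct and follows essentially the same strategy as the paper's: each point of orbit length $n_i>1$ forces $A$ to have an eigenvalue that is a nontrivial $n_i$-th root of unity, coprimality makes the resulting eigenvalue sets disjoint, complex conjugation pairs up the non-real ones, and $\det A=1$ rules out the one problematic case where only the eigenvalue $-1$ is produced. The only cosmetic differences are that you extract the eigenvalues by decomposing the point itself into eigencomponents (rather than via the vanishing of $\det(\id+A+\cdots+A^{w_j-1})$ applied to $x_j-\overline{x}_j$) and that you close the $-1$ case by the evenness of $m_{-1}$ rather than by the paper's contradiction on the sign of the determinant when $m=2s-1$.
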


\begin{proof}
Denote by $w_1,\ldots,w_s>1$ the pairwise coprime lengths.
Take any $j=1,\ldots,s$.
Take $x_j\in\RR^m-0$ such that $A^{w_j}x_j = x_j$.
Define
\[
\overline x_j = \frac{x_j + A x_j + \ldots + A^{w_j-1}x_j}{w_j}.
\]
Then $A\overline x_j = \overline x_j$.
So for $u_j := x_j - \overline x_j$ we have
$u_j + A u_j + \ldots + A^{w_j-1}u_j = 0$.
Hence
\[
\det \left(\id + A + \ldots + A^{w_j-1} \right) =
\det\prod_{k=1}^{w_j-1}\left( A - e^{2\pi i k/w_j} \id \right) = 0.
\]
So $A$ has an eigenvalue $e^{2\pi ik_j/w_j}$ for some $1\le k_j\le w_j-1$.
Since $w_1,\ldots,w_s$ are pairwise coprime, all these $s$ eigenvalues are pairwise distinct.

Since $A$ is a real operator, for odd $w_j$ the conjugate
$e^{-2\pi ik_j/w_j}$ is also an eigenvalue of $A$.
Observe that $n$ is not smaller than the number of eigenvalues of $A$.
 
If every $w_j$ is odd, then $n\ge 2s$.

If some $w_j$ is even, then such $j$ is unique.
If further $n<2s$, then $n=2s-1$, and the eigenvalues of $A$ are $e^{\pm 2\pi ik_t/w_t}$ for $t\ne j$, together with the eigenvalue $-1$ corresponding to the even $w_j$.
This is impossible because the product of the eigenvalues is $\det A > 0$.
\end{proof}

\begin{proof}[Proof of Proposition \ref{lowerbound}]
Suppose $f$ is a periodic map on $M$ and there are $l$ points having pairwise coprime lengths of $f$-orbits greater than $1$. Suppose $A: \RR^m\to \RR^m$, $A\in SO(m)$,  
extends $f$ for some embedding $e: M\to \RR^m$.
Then there are $s$ points having pairwise coprime lengths of $A$-orbits greater than $1$.
 By Lemma 3.1, $m\ge 2l$.
\end{proof}

\section{Applications to surfaces}

\begin{proof}[Proof of Corollary \ref{2l}] It follows from Proposition \ref{lowerbound} and the following Proposition \ref{example}. \end{proof}

\begin{proposition}\label{example} For each given integer $l>0$, there is a  periodic map $f$  on a closed orientable surface $F$ so that there are $l$ points having pairwise coprime lengths of $f$-orbits greater than $1$.
\end{proposition}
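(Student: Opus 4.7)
The plan is to realize the required periodic map as the generator of the deck transformation group of a cyclic branched cover $\pi\colon F \to S^2$ of degree $n = p_1 p_2 \cdots p_l$, where $p_1, \ldots, p_l$ are $l$ distinct primes. The key elementary fact to exploit is that for a regular cyclic branched cover of degree $n$ with local monodromy $b_i \in \ZZ_n$ at a branch point $q_i \in S^2$, the fiber $\pi^{-1}(q_i)$ is a single $\ZZ_n$-orbit of size $\gcd(b_i, n)$; equivalently, the stabilizer of each preimage is the subgroup $\langle b_i\rangle \le \ZZ_n$, of order $n/\gcd(b_i, n)$. Thus, to produce an orbit of length exactly $p_i$ in $F$, I will set $b_i = p_i$, so that $\gcd(b_i, n) = p_i$.

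For $l \ge 2$, fix $l+1$ points $q_1, \ldots, q_{l+1} \in S^2$ and take $b_i = p_i$ for $i = 1, \ldots, l$ together with $b_{l+1} \equiv -(p_1 + \cdots + p_l) \pmod n$. This data determines a homomorphism $\phi\colon \pi_1(S^2 \setminus \{q_1, \ldots, q_{l+1}\}) \to \ZZ_n$ sending the loop $c_i$ around $q_i$ to $b_i$: it is well-defined because the only relation in $\pi_1$ is $\prod c_i = 1$ and we have $\sum b_i \equiv 0 \pmod n$, and it is surjective because $\gcd(p_1, \ldots, p_l) = 1$ already forces $b_1, \ldots, b_l$ to generate $\ZZ_n$. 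The corresponding connected regular cover of $S^2 \setminus \{q_1, \ldots, q_{l+1}\}$ compactifies uniquely to a branched cover $\pi\colon F \to S^2$ with $F$ a closed orientable surface, and a generator $f$ of $\ZZ_n$ acts as a periodic homeomorphism by deck transformations. Picking any $P_i \in \pi^{-1}(q_i)$ for $i = 1, \ldots, l$ yields $l$ points in $F$ whose $f$-orbits have lengths $p_1, \ldots, p_l$, hence pairwise coprime and each greater than $1$. The easy case $l = 1$ is handled separately by taking $S^2$ with the rotation through angle $2\pi/p_1$, under which any non-polar point has orbit length $p_1$.

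The only part requiring any care is the dictionary between monodromy and orbit sizes invoked above, but this is completely standard for regular branched covers and follows from identifying the fiber over $q_i$ with the coset space $\ZZ_n/\langle b_i \rangle$ via the monodromy action of $\phi$. I expect no essential obstacle: a Riemann--Hurwitz computation then yields the genus of $F$ explicitly in terms of $n$, the $p_i$, and $\gcd(b_{l+1}, n)$, though the genus itself plays no role in the conclusion of Proposition \ref{example}.
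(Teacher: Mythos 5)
Your proof is correct and follows essentially the same route as the paper: both realize $f$ as a deck transformation of a degree-$p_1\cdots p_l$ cyclic branched cover of $S^2$, obtained from a surjection of the fundamental group of a punctured sphere (respectively, of the orbifold $O_l$) onto $\ZZ_{p_1\cdots p_l}$ whose local monodromy at the $i$-th branch point generates a subgroup of index $p_i$, so that the fiber there is an orbit of length $p_i$. The only differences are cosmetic: you use $l+1$ branch points and the standard completion of an unbranched cover, while the paper uses $2l$ singular points and the orbifold-fundamental-group formalism with a torsion-free-kernel argument.
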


\begin{proof}
We will use the Hurwitz type construction to get such a periodic map $f$.
The theories of 2-orbifolds, especially those of fundamental groups and covering spaces,  are parallel to those of 2-manifolds, see \cite{Sc}.

Let $p_1, ..., p_l$ be the first $l$ prime numbers
and $P_l=p_1p_2...p_l$ be their product, and $\delta_j=\frac {P_l}{p_j}.$
Let $O_l$ be the 2-orbifold having underlying space  the 2-sphere, and two singular points of index $\delta_j$ for each $j\in 
\{1,2,....,l\}$. Then we have its orbifold fundamental group presentation

$$\pi_1(O_l)=\left< x_1, {x'}_1, x_2, {x'}_2,..., x_l, {x'}_l\vert \prod_{j=1}^l x_j{x'}_j=1,\,\, x_j^{\delta_j}={x'}_j^{\delta_j}=1,\,\, j\in \{1,2,....l\} \right> .$$

Now define a homomorphism 
$$\phi : \pi_1(O_l)\to \ZZ_{P_l}=\left<f | f^{P_l}=1\right>$$
by
$$\phi(x_j)=f^{p_l}, \,\,\,\phi(x'_j)=f^{-p_l},$$
where $f$ is a generator of the cyclic group $\mathbb{Z}_{P_l}$ for this moment.

Since $p_1, p_2, ..., p_l$ are pairwise co-prime, $\phi$ is surjective by Chinese Remainder Theorem. 

Since all torsion subgroups  in $\pi_1(O_l)$ are conjugated to those finite cyclic groups $\left< x_j \right>$ and $\left< x'_j \right>$, 
$j=1,..., l$ \cite{Gr}, and those  $\left< x_j \right>$ and $\left< x'_j\right>$  inject into  $\ZZ_{P_l}$ under $\phi$, we conclude that
the kernel of $\phi$ is torsion free \cite{Ha}, and hence  the kernel of $\phi$ is the fundamental group of a surface  $F$.  Hence we have a short exact sequence
$$1\to \pi_1(F)\to \pi_1(O_l)\to \left<f | f^{P_l}=1\right>\to 1,$$
where $f$ acts on $F$ as a cyclic group action of order 
$P_l$, and we have a cyclic branched cover 
$$p: F\to F/f= O_l.$$ 
Since the orbifold $O_l$ is closed and orientable, the surface $F$ is  closed and orientable.
There is a one-to-one correspondence between singular points of index $\delta_j$ on $O_l$ and the $f$-orbits on $F$
of length $p_j$.  We conclude that there are $l$ points having the first $l$ primes as their lengths of $f$-orbits. Noting that the first $l$ primes are pairwise coprime and greater than $1$, we have proved Proposition \ref{example}.
\end{proof}


\begin{thebibliography}{HJSMM}

\bibitem[Cos]{Cos} A. Costa,  {\it Embeddable anticonformal automorphisms of Riemann surfaces.} Comment. Math. Helv. 72 (1997), no. 2, 203-215.

\bibitem [Hu]{Hu} A. Hurwitz, 
{\it Ueber Riemann'sche Flachen mit gegebenen Verzweigungspunkten.} (German) 
Math. Ann. 39 (1891), no. 1, 1-60. 


\bibitem[FNPT]{FNPT} E. Flapan, R. Naimi, J. Pommersheim, H. Tamvakis, {\it Topological symmetry groups of graphs embedded in the 3-sphere.} Comment. Math. Helv. 80 (2005), no. 2, 317-354. 

\bibitem[GP]{GP} V. Guillemin, A. Pollack,  {\it Differential topology.} Prentice-Hall, Inc., Englewood Cliffs, N.J., 1974. 

\bibitem[Ha]{Ha} W. J. Harvey, Cyclic groups of automorphisms of a compact Riemann surface, Quart. J. Math. Oxford Ser. (2) 17 (1966), 86-97. 

 \bibitem[Gr]{Gr} L. Greenberg,  {\it Finiteness theorems for Fuchsian and Kleinian groups.} Discrete groups and automorphic functions (Proc. Conf., Cambridge, 1975), pp. 199-257. Academic Press, London, 1977.


\bibitem[Mos]{Mos} G. D. Mostow,  {\it Equivariant embeddings in Euclidean space.}  Ann. of Math. (2) 65 (1957), 432-446.

\bibitem[Pa]{Pa} R.S. Palais, {\it Imbedding of compact, differentiable transformation groups in orthogonal representations.} J. Math. Mech. 6 (1957), 673-678. 


\bibitem[Sc]{Sc} P. Scott,  {\it The geometries of 3-manifolds.} Bull. London Math. Soc. 15 (1983), no. 5, 401–487.

\bibitem[WWZZ1]{WWZZ1} C. Wang, S.C.  Wang, Y.M.  Zhang, B. Zimmermann,  {\it Extending finite group actions on surfaces over $S^3$.} Topology Appl. 160 (2013), no. 16, 2088-2103.


\bibitem[WWZZ2]{WWZZ2} C. Wang, S.C.  Wang, Y.M.  Zhang, B. Zimmermann, {\it Embedding surfaces into $S^3$ with maximum symmetry. } Groups Geom. Dyn. 9 (2015), no. 4, 1001-1045. 

\bibitem[Wa]{Wa} Z. Z. Wang, {\it The Minimal Dimension of a Sphere with an Equivariant Embedding of the Bouquet of $g$ Circles is $2g-1$.} Discrete $\&$ Computational Geometry volume 67,  1257-1265 (2022)

\bibitem[Zi]{Zi} B. Zimmermann, {\it On large groups of symmetries of finite graphs embedded in spheres}. J. Knot Theory Ramifications 27 (2018), no. 3, 1840011, 8 pp.

\end{thebibliography}
\end{document}